\theoremstyle{plain}
\newtheorem{lemma}{Lemma}[section]
  \newtheorem{theorem}[lemma]{Theorem}
  \newtheorem*{theorem*}{Theorem}
  \newtheorem*{fact*}{Fact}
  \newtheorem*{claim*}{Claim}
  \newtheorem{conjecture}[lemma]{Conjecture}
\theoremstyle{definition}
\theoremstyle{remark}
  \newtheorem{remark}[lemma]{Remark}
\newsavebox{\@brx}
\newcommand{\llangle}[1][]{\savebox{\@brx}{\(\m@th{#1\langle}\)}%
  \mathopen{\copy\@brx\kern-0.5\wd\@brx\usebox{\@brx}}}
\newcommand{\rrangle}[1][]{\savebox{\@brx}{\(\m@th{#1\rangle}\)}%
  \mathclose{\copy\@brx\kern-0.5\wd\@brx\usebox{\@brx}}}
\title{Formulations of the PFR Conjecture over $\mathbb{Z}$}
\author{Freddie Manners}
\address{Freddie Manners, 450 Serra Mall Building 380, Stanford 94305, USA}
\email{fmanners@stanford.edu}
\begin{document}

\newcommand{\eps}[0]{\varepsilon}

\newcommand{\AAA}[0]{\mathbb{A}}
\newcommand{\CC}[0]{\mathbb{C}}
\newcommand{\EE}[0]{\mathbb{E}}
\newcommand{\FF}[0]{\mathbb{F}}
\newcommand{\NN}[0]{\mathbb{N}}
\newcommand{\PP}[0]{\mathbb{P}}
\newcommand{\QQ}[0]{\mathbb{Q}}
\newcommand{\RR}[0]{\mathbb{R}}
\newcommand{\TT}[0]{\mathbb{T}}
\newcommand{\ZZ}[0]{\mathbb{Z}}

\newcommand{\cA}[0]{\mathscr{A}}
\newcommand{\cB}[0]{\mathscr{B}}
\newcommand{\cC}[0]{\mathscr{C}}
\newcommand{\cD}[0]{\mathscr{D}}
\newcommand{\cE}[0]{\mathscr{E}}
\newcommand{\cF}[0]{\mathscr{F}}
\newcommand{\cH}[0]{\mathscr{H}}
\newcommand{\cG}[0]{\mathscr{G}}
\newcommand{\cK}[0]{\mathscr{K}}
\newcommand{\cL}[0]{\mathscr{L}}
\newcommand{\cM}[0]{\mathscr{M}}
\newcommand{\cN}[0]{\mathscr{N}}
\newcommand{\cP}[0]{\mathscr{P}}
\newcommand{\cR}[0]{\mathscr{S}}
\newcommand{\cS}[0]{\mathscr{S}}
\newcommand{\cT}[0]{\mathscr{S}}
\newcommand{\cU}[0]{\mathscr{U}}
\newcommand{\cW}[0]{\mathscr{W}}
\newcommand{\cX}[0]{\mathscr{X}}
\newcommand{\cY}[0]{\mathscr{Y}}
\newcommand{\cZ}[0]{\mathscr{Z}}

\newcommand{\fg}[0]{\mathfrak{g}}
\newcommand{\fk}[0]{\mathfrak{k}}
\newcommand{\fZ}[0]{\mathfrak{Z}}

\newcommand{\bmu}[0]{\boldsymbol\mu}

\newcommand{\AUT}[0]{\mathbf{Aut}}
\newcommand{\Aut}[0]{\operatorname{Aut}}
\newcommand{\Frob}[0]{\operatorname{Frob}}
\newcommand{\GI}[0]{\operatorname{GI}}
\newcommand{\HK}[0]{\operatorname{HK}}
\newcommand{\HOM}[0]{\mathbf{Hom}}
\newcommand{\Hom}[0]{\operatorname{Hom}}
\newcommand{\Ind}[0]{\operatorname{Ind}}
\newcommand{\Lip}[0]{\operatorname{Lip}}
\newcommand{\LHS}[0]{\operatorname{LHS}}
\newcommand{\RHS}[0]{\operatorname{RHS}}
\newcommand{\Sub}[0]{\operatorname{Sub}}
\newcommand{\id}[0]{\operatorname{id}}
\newcommand{\image}[0]{\operatorname{Im}}
\newcommand{\poly}[0]{\operatorname{poly}}
\newcommand{\trace}[0]{\operatorname{Tr}}
\newcommand{\sig}[0]{\ensuremath{\tilde{\cS}}}
\newcommand{\psig}[0]{\ensuremath{\cP\tilde{\cS}}}
\newcommand{\metap}[0]{\operatorname{Mp}}
\newcommand{\symp}[0]{\operatorname{Sp}}
\newcommand{\dist}[0]{\operatorname{dist}}
\newcommand{\stab}[0]{\operatorname{Stab}}
\newcommand{\HCF}[0]{\operatorname{hcf}}
\newcommand{\LCM}[0]{\operatorname{lcm}}
\newcommand{\SL}[0]{\operatorname{SL}}
\newcommand{\GL}[0]{\operatorname{GL}}
\newcommand{\rk}[0]{\operatorname{rk}}
\newcommand{\sgn}[0]{\operatorname{sgn}}
\newcommand{\uag}[0]{\operatorname{UAG}}
\newcommand{\freiman}[0]{Fre\u{\i}man}
\newcommand{\tf}[0]{\operatorname{tf}}
\newcommand{\ev}[0]{\operatorname{ev}}
\newcommand{\bg}[0]{\operatorname{big}}
\newcommand{\sml}[0]{\operatorname{sml}}
\newcommand{\vol}[0]{\operatorname{vol}}

\newcommand{\Conv}[0]{\mathop{\scalebox{1.5}{\raisebox{-0.2ex}{$\ast$}}}}
\newcommand{\bs}[0]{\backslash}
\newcommand{\heis}[3]{ \left(\begin{smallmatrix} 1 & \hfill #1 & \hfill #3 \\ 0 & \hfill 1 & \hfill #2 \\ 0 & \hfill 0 & \hfill 1 \end{smallmatrix}\right)  }
\newcommand{\uppar}[1]{\textup{(}#1\textup{)}}
\newcommand{\pol}[0]{\l}

\setcounter{tocdepth}{1}

\maketitle

\begin{abstract}
  The polynomial Fre\u{\i}man--Ruzsa conjecture is a fundamental open question in additive combinatorics.  However, over the integers (or more generally $\RR^d$ or $\ZZ^d$) the optimal formulation has not been fully pinned down.

  The conjecture states that a set of small doubling is controlled by a very structured set, with polynomial dependence of parameters.  The ambiguity concerns the class of structured sets needed.  A natural formulation in terms of generalized arithmetic progressions was recently disproved by Lovett and Regev.  A more permissive alternative is in terms of \emph{convex progressions}; this avoids the obstruction, but uses is a significantly larger class of objects, yielding a weaker statement.

  Here we give another formulation of PFR in terms of Euclidean ellipsiods (and some variations).  We show it is in fact equivalent to the convex progression version; i.e.~that the full range of convex progressions is not needed.  The key ingredient is a strong result from asymptotic convex geometry.
\end{abstract}

\section{Introduction}

A celebrated theorem of Fre\u{\i}man \cite{freiman} states that if $A \subseteq \ZZ$ is a finite set satisfying the small doubling hypothesis $|A+A| \le K |A|$ for some small $K$, where $A +A$ is the sumset $\{x+y \colon x,y \in A\}$, then $A$ must be contained in a generalized arithmetic progression, i.e.~a set of the form
\[
  P = \big\{ a_0 + n_1 a_1 + \dots + n_d a_d \colon n_i \in \{0,\dots,N_i - 1\} \big\}
\]
for some integers $a_i$, where the rank $d$ and size\footnote{Note this quantity may not be the same as the cardinality $|P|$ if the sums are not distinct.  Such a case is called an \emph{improper} generalized arithmetic progression.  We use the term ``size'' in this technical sense throughout.} $N_1 \dots N_d$ of the generalized arithmetic progression are bounded by functions of $K$ only.  An analogue for subsets of general abelian groups was obtained by Green and Ruzsa \cite{green-ruzsa}.

The polynomial Fre\u{\i}man--Ruzsa conjecture is a central open question in additive combinatorics, and asks for essentially optimal quantitative bounds in modified versions of these structural results.  The most commonly discussed case is when $A \subseteq \FF_p^n$ for some bounded $p$; then the conjecture states that if $|A+A| \le K |A|$ then $A$ is contained in $O(K^{O(1)})$ cosets of the same subgroup $H \le \FF_p^n$, where $|H| = O(K^{O(1)}) |A|$.

For subsets of $\ZZ$, or more generally $\ZZ^m$ or $\RR^m$, there has been less much consensus on what the correct statement should be.  One natural formulation is the following.
\begin{conjecture}[PFR; GAP formulation]
  \label{conj:gap-pfr}
  If $G = \RR^m$ or $\ZZ^m$ and $A \subseteq G$ is a finite set satisfying $|A+A| \le K |A|$, then there exists a generalized arithmetic progression
  \[
    P = \big\{ a_0 + n_1 a_1 + \dots + n_d a_d \colon n_i \in \{0,\dots,N_i - 1\} \big\}
  \]
for some choice of $a_i \in G$, with rank\footnote{Here and subsequently all $O(\cdot)$ constant are absolute and in particular independent of $m$.} $d = O(\log 2 K)$ and size $O(K^{O(1)}) |A|$; and a set $X \subseteq G$, $|X| = O(K^{O(1)})$; such that $A \subseteq P + X$.
\end{conjecture}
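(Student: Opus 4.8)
The plan is to derive Conjecture~\ref{conj:gap-pfr} from a polynomial Bogolyubov--Ruzsa statement and then to attack that statement. Concretely, I would aim to prove the claim $(\star)$: \emph{if $0 \in A$ and $|A+A| \le K|A|$ then $2A-2A$ contains a proper generalized arithmetic progression $Q$ of rank $O(\log 2K)$ with $|Q| \ge K^{-O(1)}|A|$.} \emph{Step 1 ($(\star)\Rightarrow$ Conjecture~\ref{conj:gap-pfr}).} Translating so that $0\in A$, the Pl\"{u}nnecke--Ruzsa inequalities give $|2A-2A|\le K^4|A|$ and $|3A-2A|\le K^5|A|$. For $Q$ as in $(\star)$, $A+Q\subseteq A+2A-2A=3A-2A$, so $|A+Q|\le K^5|A|\le K^{O(1)}|Q|$, and Ruzsa's covering lemma gives $X\subseteq A$ with $|X|=K^{O(1)}$ and $A\subseteq X+(Q-Q)$. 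The difference set $P:=Q-Q$ is a generalized arithmetic progression of the same rank $O(\log 2K)$ and of size at most $2^{O(\log 2K)}|Q|\le K^{O(1)}|A|$ (since $|Q|\le|2A-2A|\le K^4|A|$); as $\RR^m$ and $\ZZ^m$ are torsion-free there is no finite ambient subgroup to absorb, so $A\subseteq P+X$ is exactly the assertion. Everything thus reduces to $(\star)$.

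\emph{Step 2 (structure inside $2A-2A$).} For $(\star)$ I would first reduce to $G=\ZZ$ by a linear \freiman{} embedding, pass to a model $\ZZ/N\ZZ$ with $N=K^{O(1)}|A|$ via Ruzsa's modelling lemma, and run Bogolyubov's argument, which places a Bohr set $B(\Gamma,1/4)$ inside $2A-2A$ (in the model, hence after pulling back), where $\Gamma=\{\xi:|\widehat{1_A}(\xi)|\ge\alpha^{3/2}\}$ and $\alpha=|A|/N\ge K^{-O(1)}$; this set has density $K^{-O(1)}$. The required bound $|\Gamma|=O(\log 2K)$ on the rank is far beyond what Chang's inequality yields and would have to be extracted from the entropy/compression machinery behind the recent proof of polynomial \freiman--Ruzsa over $\FF_2^n$ (or, with weaker bounds, from Croot--Sisask almost-periodicity). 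Even granting this, one must confront a structural point: all of these methods naturally output a Bohr set --- morally the lattice points in an ellipsoid --- rather than an axis-parallel box.

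\emph{Step 3 and the main obstacle.} From a Bohr set $B$ of rank $d=O(\log 2K)$ and density $K^{-O(1)}$ the classical geometry-of-numbers passage --- move to the dual lattice, apply Minkowski's second theorem, read off a box --- produces a proper GAP $Q\subseteq B$ of rank $\le d$, but only with $|Q|\ge(c/d)^d|B|$. This is the step I expect to be fatal: the loss $(c/d)^d=\exp(-\Theta(\log 2K\cdot\log\log 2K))$ is super-polynomial, and it is intrinsic, because a generic high-dimensional Bohr set (equivalently, the lattice points in a generic ellipsoid) admits no inscribed axis-parallel box of comparable volume and cannot be covered by merely $K^{O(1)}$ translates of a bounded-rank GAP of size $K^{O(1)}|A|$. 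The obstruction is the \emph{shape} of the object produced by any Bogolyubov- or Croot--Sisask-type argument, not its size, so no improvement confined to Step~2 removes it --- one would need to manufacture a genuinely box-shaped structure with no dimensional loss. Since the construction of Lovett and Regev shows precisely this to be impossible, a proof of Conjecture~\ref{conj:gap-pfr} in the stated form would require a fundamentally new idea at this step; it is exactly this barrier that makes it natural to replace generalized arithmetic progressions by convex, or ellipsoidal, progressions in the sequel.
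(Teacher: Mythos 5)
There is no proof of this statement in the paper to compare against: Conjecture~\ref{conj:gap-pfr} is stated as a conjecture precisely because the paper immediately notes, citing Lovett and Regev, that \emph{it is false}. The paper's whole point is that the GAP formulation fails, and that one should therefore work with convex (Conjecture~\ref{conj:convex-pfr}) or ellipsoid (Conjecture~\ref{conj:ellipsoid-pfr}) progressions instead, proving those two to be equivalent.

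Your write-up reaches the correct verdict, but only in the final sentences; it is framed throughout as a ``proof proposal,'' when it should open with the observation that the statement is known to be false and then explain why. That said, the diagnosis you give is accurate and on target. Step~1 (the reduction to a Bogolyubov--Ruzsa statement via Pl\"unnecke--Ruzsa and Ruzsa covering) is standard and correct. Step~2 correctly describes where the rank bound $O(\log 2K)$ would have to come from. Step~3 correctly identifies the genuine obstruction: everything in the Bogolyubov/Croot--Sisask/Fourier toolbox naturally outputs a Bohr set, i.e.\ the lattice points in an ellipsoid, and converting an ellipsoid to an axis-aligned box via Minkowski's second theorem costs a factor $d^{\Theta(d)} = \exp(\Theta(\log K \cdot \log\log K))$, which is super-polynomial in $K$. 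The Lovett--Regev construction (lattice points in a ball intersected with a generic lattice) shows this loss is unavoidable in general, not an artifact of the method. This is exactly the ``shape, not size'' distinction the paper's remark draws when it observes that the weakness of Conjecture~\ref{conj:gap-pfr} is the requirement that $B$ be aligned to a lattice basis, and it is precisely what motivates passing to ellipsoid-shaped progressions and the use of Milman's reverse Brunn--Minkowski inequality to show that ellipsoids already suffice among general convex bodies. In short: you have not produced (and could not produce) a proof, but your analysis of why the natural approach fails is essentially the paper's own motivation, and you should state the conclusion up front rather than as a surprise at the end.
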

Unfortunately this formulation is false: this was shown recently by Lovett and Regev \cite{lovett-regev}, answering a question of Green \cite{green}.  Their counterexample (in $\RR^m$) has the form $A = B \cap \cL$ where $B \subseteq \RR^m$ is a large Euclidean ball and $\cL \subseteq \RR^m$ is a randomly chosen lattice of full rank.

This leaves open the following formulation, first discussed (in a closely related form) in \cite{green}.

\begin{conjecture}[PFR; convex formulation]
  \label{conj:convex-pfr}
  For $A$, $G$, $K$ as in Conjecture \ref{conj:gap-pfr}, there exists a \emph{convex progression}
  \[
    P = \big\{ a_0 + n_1 a_1 + \dots + n_d a_d \colon (n_1,\dots,n_d) \in \ZZ^d \cap B \big\}
  \]
where $B \subseteq \RR^d$ is some centrally symmetric convex body and $a_0,\dots,a_d \in G$ are given, with rank $d = O(\log 2 K)$ and size $|B \cap \ZZ^d| = O(K^{O(1)}) |A|$; and a set $X \subseteq G$, $|X| = O(K^{O(1)})$; such that $A \subseteq P + X$.
\end{conjecture}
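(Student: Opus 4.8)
The plan is to run the classical \freiman{}-type machinery for sets of small doubling in $\ZZ$, taking care to preserve the \emph{convex} shape of the structure that emerges rather than flattening it into a box --- which, by the Lovett--Regev example, is impossible. First I would reduce to a model. Since a finite $A\subseteq\RR^m$ lies in a finitely generated, hence free abelian, subgroup, we may assume $A\subseteq\ZZ^r$ after an injective homomorphism, and then (applying a generic linear projection $\ZZ^r\to\ZZ$, which is a \freiman{} isomorphism of sufficiently high order on $A$) that $A\subseteq\ZZ$. The Pl\"unnecke--Ruzsa inequality gives $|nA-mA|\le K^{n+m}|A|$, so Ruzsa's modelling lemma produces, after replacing $A$ by a subset of comparable size, a \freiman{} isomorphism $\phi$ --- valid on a large iterated sumset of $A$ --- onto a set $\tilde A\subseteq\ZZ/N\ZZ$ with $N$ prime, $N=O(K^{O(1)}|A|)$, and density $\alpha:=|\tilde A|/N\ge K^{-O(1)}$. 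Everything now reduces to locating the right structure inside $2\tilde A-2\tilde A$.

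The crux is a \emph{polynomial Bogolyubov--Ruzsa lemma over $\ZZ/N\ZZ$}, the cyclic-group analogue of the $\FF_p^n$ statement recalled above: that $2\tilde A-2\tilde A$ contains a Bohr set $B(\Gamma,\rho)=\{x\in\ZZ/N\ZZ:\|\xi x/N\|\le\rho\ \text{for all}\ \xi\in\Gamma\}$ of rank $|\Gamma|=O(\log(1/\alpha))=O(\log 2K)$ and size $|B(\Gamma,\rho)|\ge\alpha^{O(1)}N\ge K^{-O(1)}|A|$. Aiming at a Bohr set is precisely what rescues the convex formulation where the GAP one fails, because Bohr sets are \emph{intrinsically convex}: writing $\Gamma=\{\xi_1,\dots,\xi_k\}$, the preimage of $B(\Gamma,\rho)$ under $\ZZ\to\ZZ/N\ZZ$ inside a fundamental domain is the projection to the first coordinate of the set of $(x,n_1,\dots,n_k)\in\ZZ^{1+k}$ with $|x|\le N$ and $|\xi_j x-n_j N|\le\rho N$ for each $j$ --- that is, the intersection of the lattice $\ZZ^{1+k}$ with a centrally symmetric convex body. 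For $\rho<1/2$ this projection is injective on the lattice points of the body, so choosing a basis of $\ZZ^{1+k}$ exhibits $\phi^{-1}(B(\Gamma,\rho))$, and hence its image in $\ZZ^r$ and thence in $G$, as a convex progression $P$ of rank $d=1+k=O(\log 2K)$ with size $|B(\Gamma,\rho)|\le|2A-2A|\le K^{4}|A|$, satisfying $P\subseteq 2A-2A$ and $|P|\ge K^{-O(1)}|A|$.

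It remains to cover $A$ by few translates of a convex progression. Since $P\subseteq 2A-2A$, the Pl\"unnecke--Ruzsa inequality gives $|A+P|\le|A+2A-2A|\le K^{5}|A|\le K^{O(1)}|P|$, so Ruzsa's covering lemma yields $A\subseteq(P-P)+X$ with $|X|=O(K^{O(1)})$. If $P$ has generators $a_1,\dots,a_d$ and body $B$, then $P-P$ is contained in the convex progression $Q$ with base point $0$, the same generators, and body $2B$; its size is $|2B\cap\ZZ^d|\le C^d\,|B\cap\ZZ^d|$ for an absolute constant $C$ (a standard estimate for lattice points in dilates of a symmetric convex body), which is $O(K^{O(1)})|A|$ since $d=O(\log 2K)$, and its rank is still $d$. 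Thus $A\subseteq Q+X$ is exactly the conclusion of Conjecture~\ref{conj:convex-pfr}.

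The sole obstacle --- and it is the whole difficulty --- is the second step. A polynomial Bogolyubov--Ruzsa lemma over $\ZZ/N\ZZ$ (equivalently, with bounds uniform over all finite abelian groups) is itself open, and is essentially equivalent to the statement we are trying to prove. The best unconditional input, Sanders' quasi-polynomial Bogolyubov--Ruzsa theorem, supplies only a Bohr set of rank $O(\log^{O(1)}2K)$ and density $\exp(-O(\log^{O(1)}2K))$; fed through the argument above, this yields a quasi-polynomial version of Conjecture~\ref{conj:convex-pfr} --- a convex progression of quasi-polynomial rank and size, covered by quasi-polynomially many translates. Every other step loses only polynomial factors, so obtaining polynomial bounds in the Bogolyubov--Ruzsa / inverse-$U^2$ step --- the recognised hard core of all PFR-type statements --- is the only missing ingredient.
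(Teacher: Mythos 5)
The statement you were asked to prove is Conjecture~\ref{conj:convex-pfr}, which is precisely that: an open conjecture. The paper contains no proof of it. The paper's actual contribution, Theorem~\ref{thm:main}, is an \emph{equivalence} between Conjecture~\ref{conj:convex-pfr} and Conjecture~\ref{conj:ellipsoid-pfr}, proved via Milman's reverse Brunn--Minkowski inequality and a covering argument. So there is no ``paper's own proof'' to compare your proposal against, and any purported unconditional proof of the statement would have to be wrong.

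Your write-up, to its credit, does not claim a proof. It is an honest and essentially accurate account of the standard reduction: pass to $\ZZ$, apply Ruzsa's modelling lemma to land in $\ZZ/N\ZZ$, invoke a Bogolyubov--Ruzsa lemma to find a large Bohr set inside $2\tilde A - 2\tilde A$, observe that Bohr set preimages are lattice points in a centrally symmetric convex body (this is exactly the observation that explains why the convex formulation survives Lovett--Regev while the GAP one does not), and finish with Ruzsa covering plus the lattice-point dilation estimate $|2B\cap\ZZ^d|\le C^d|B\cap\ZZ^d|$. You correctly flag that the single load-bearing step --- a polynomial-bounds Bogolyubov--Ruzsa lemma over $\ZZ/N\ZZ$ --- is open and is widely regarded as equivalent in difficulty to PFR itself, and that Sanders' theorem yields only quasi-polynomial bounds. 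All of that is right. The one thing to be clear-eyed about is that what you have written is therefore a \emph{reduction} of one open conjecture to another (closely related) open conjecture, not a proof, and it answers a different question from the one the paper addresses: the paper sidesteps the hard core entirely and instead shows that, conditionally, one may always take the convex body $B$ to be an ellipsoid. If you want to engage with the paper's actual argument, the ingredient to study is Theorem~\ref{thm:mil} and the packing/covering comparison of $C\cap\ZZ^d$ with $B\cap\ZZ^d$ via the $M$-ellipsoid.

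A couple of small technical cautions on your sketch, should you pursue it elsewhere: the passage from the Bohr set in $\ZZ/N\ZZ$ back to a convex progression in $\ZZ$ (and then in $G$) via the Fre\u{\i}man isomorphism $\phi^{-1}$ requires more care than ``choosing a basis of $\ZZ^{1+k}$'' --- one must check that $\phi^{-1}$, applied coordinatewise to the generators, really does produce a convex progression in $G$ whose lattice points biject with those of the Bohr body, and that the ambient rank bookkeeping survives the initial projection $\ZZ^r\to\ZZ$. These are standard but not free. None of this affects your main (and correct) diagnosis of where the genuine obstruction lies.
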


The previous formulation is (equivalent to) the special case of this one where $B$ must be an axis-aligned cuboid $[-N_1,N_1] \times \dots \times [-N_d, N_d]$; and such $B$ do not suffice.  It is natural to ask how how large a collection of convex sets $B$ is necessary for the conjecture to have a chance of being true.

Our main formulation will use only convex sets $B$ which are (not necessarily axis-aligned) \emph{Euclidean ellipsiods}, i.e.~sets of the form $B = \{ v \in \RR^d \colon \|\gamma(v)\|_2 \le 1\}$ for some $\gamma \in \GL_d(\RR)$.  That is, we state the following:

\begin{conjecture}[PFR; ellipsoid formulation]
  \label{conj:ellipsoid-pfr}
  For $A$, $G$, $K$ as before, there exists an \emph{ellipsoid progression}
  \[
    P = \big\{ a_0 + n_1 a_1 + \dots + n_d a_d \colon \vec{n} = (n_1,\dots,n_d) \in \ZZ^d,\, \|\gamma(\vec{n})\|_2 \le 1 \big\}
  \]
  where $a_0,\dots,a_d \in G$ and $\gamma \in \GL_d(\RR)$ are given, with rank $d = O(\log 2 K)$ and size $|\{ \vec{n} \in \ZZ^d \colon \|\gamma(\vec{n})\|_2 \le 1 \}| = O(K^{O(1)}) |A|$; and a set $X \subseteq G$, $|X| = O(K^{O(1)})$; such that $A \subseteq P + X$.
\end{conjecture}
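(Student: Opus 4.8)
The plan is to show that Conjecture~\ref{conj:ellipsoid-pfr} is equivalent to Conjecture~\ref{conj:convex-pfr}. One direction is immediate: a Euclidean ellipsoid is a centrally symmetric convex body and the two notions of ``size'' coincide, so every ellipsoid progression is in particular a convex progression, and Conjecture~\ref{conj:ellipsoid-pfr} formally implies Conjecture~\ref{conj:convex-pfr}. For the converse, assume Conjecture~\ref{conj:convex-pfr} and let $A\subseteq G$ satisfy $|A+A|\le K|A|$; it supplies a convex progression $P=\{a_0+n_1a_1+\dots+n_da_d\colon \vec n\in\ZZ^d\cap B\}$ of rank $d=O(\log 2K)$ and size $|B\cap\ZZ^d|=O(K^{O(1)})|A|$, together with $X\subseteq G$ of size $O(K^{O(1)})$ with $A\subseteq P+X$. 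Write $\phi\colon\ZZ^d\to G$ for the affine map $\vec n\mapsto a_0+\sum_i n_ia_i$, so that $P=\phi(\ZZ^d\cap B)$; the task is to replace the body $B$ by a Euclidean ellipsoid while only worsening the rank, the size, and the covering set by factors $K^{O(1)}$.

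The decisive ingredient is a theorem of V.~Milman from asymptotic convex geometry: every centrally symmetric convex body $B\subseteq\RR^d$ admits an \emph{$M$-ellipsoid} $E$ (one may take $\vol(E)=\vol(B)$) for which the covering numbers satisfy $N(B,E)\le e^{Cd}$ and $N(E,B)\le e^{Cd}$, where $N(\cdot,\cdot)$ is the least number of translates of the second body covering the first and $C$ is absolute. Since $d=O(\log 2K)$ this gives $N(B,E),N(E,B)\le e^{Cd}=(2K)^{O(1)}=K^{O(1)}$; it is precisely here that the logarithmic bound on the rank, together with an estimate that loses only $e^{O(d)}$ rather than $d^{O(d)}$, is essential --- John's ellipsoid, for instance, would be useless. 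Two soft facts convert this into what we need. First, differences of two points of a symmetric convex body $C$ lie in $2C$; so if $B\subseteq\bigcup_{i=1}^M(x_i+E)$ with $M=N(B,E)$, then choosing for each relevant $i$ a lattice point $\vec t_i\in(x_i+E)\cap\ZZ^d$ we get $\ZZ^d\cap B\subseteq\bigcup_{i=1}^M\bigl(\vec t_i+(2E\cap\ZZ^d)\bigr)$ with all $\vec t_i\in\ZZ^d$. Second, $|2C\cap\Lambda|\le 5^d\,|C\cap\Lambda|$ for any symmetric convex $C$ and any lattice $\Lambda$: pick $Y\subseteq 2C\cap\Lambda$ maximal with pairwise differences avoiding $C$; by maximality $2C\cap\Lambda\subseteq\bigcup_{y\in Y}(y+(C\cap\Lambda))$, while the sets $y+\tfrac12 C$ ($y\in Y$) are pairwise disjoint and lie in $\tfrac52 C$, so $|Y|\le 5^d$ by comparing volumes. (If $B$ is not full-dimensional, pass to its linear span and the induced lattice throughout, converting back to $\ZZ^d$ at the end by absorbing a lattice basis into the matrix $\gamma$ below.)

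Now assemble. Let $P'$ be the ellipsoid progression of rank $d$ with base point $a_0$, the same generators $a_1,\dots,a_d$, and $\gamma\in\GL_d(\RR)$ equal to half the matrix defining $E$, so that $\{\vec n\in\ZZ^d\colon\|\gamma\vec n\|_2\le 1\}=2E\cap\ZZ^d$ and hence $P'=a_0+\{\sum_i n_ia_i\colon \vec n\in 2E\cap\ZZ^d\}$. Its size is $|2E\cap\ZZ^d|\le 5^d|E\cap\ZZ^d|\le 5^d N(E,B)\,|2B\cap\ZZ^d|\le 5^{O(d)}N(E,B)\,|B\cap\ZZ^d|=K^{O(1)}|A|$, where the middle step covers $E$ by $N(E,B)$ translates of $B$ and counts the lattice points in each via the difference trick; and its rank is $d=O(\log 2K)$. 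Finally, since $\phi$ is affine, $\phi\bigl(\vec t_i+(2E\cap\ZZ^d)\bigr)=(\phi(\vec t_i)-a_0)+P'$, so $P=\phi(\ZZ^d\cap B)\subseteq\bigcup_{i=1}^M\bigl((\phi(\vec t_i)-a_0)+P'\bigr)$ and therefore $A\subseteq P+X\subseteq P'+X'$ with $X'=\{\phi(\vec t_i)-a_0\colon i\le M\}+X$, whence $|X'|\le M|X|=K^{O(1)}$. I expect the genuinely hard point to be nothing in this bookkeeping but the invocation of Milman's $M$-ellipsoid theorem itself, which is by a wide margin the least elementary input; the only real care needed elsewhere is to keep every dilation factor ($E$ versus $2E$, $B$ versus $2B$, $N(2E,E)$, and the like) accounted for and to confirm that each is swallowed by the $e^{O(d)}=K^{O(1)}$ conversion.
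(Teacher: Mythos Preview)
Your proposal is correct and follows essentially the same route as the paper: invoke Milman's $M$-ellipsoid (the paper phrases this as the reverse Brunn--Minkowski inequality, which is the same theorem in a different guise, as its own footnote acknowledges), then use a packing/covering argument to transfer the $e^{O(d)}$ covering-number bounds to lattice-point coverings, losing only $e^{O(d)}=K^{O(1)}$. The paper's execution is marginally cleaner---it chooses the translation set $Y$ directly as a maximal $(B/2)$-separated subset of $C\cap\ZZ^d$, so the centers are already lattice points and no dilation to $2E$ is needed---but this is purely cosmetic.
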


Again, the $P$ here are a special case of those in Conjecture \ref{conj:convex-pfr}.  Our main result is:

\begin{theorem}
  \label{thm:main}
  Conjectures \ref{conj:convex-pfr} and \ref{conj:ellipsoid-pfr} are equivalent.
\end{theorem}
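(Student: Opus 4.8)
The plan: one direction is free. Every Euclidean ellipsoid is a centrally symmetric convex body, so every ellipsoid progression is a convex progression; hence Conjecture \ref{conj:ellipsoid-pfr} formally implies Conjecture \ref{conj:convex-pfr}. The content is the converse, and the input from convex geometry is \emph{Milman's $M$-ellipsoid theorem} (the reverse Brunn--Minkowski inequality): there is an absolute constant $c$ so that every centrally symmetric convex body $B\subseteq\RR^d$ admits a centered ellipsoid $\cE$ with covering numbers $N(B,\cE)\le e^{cd}$ and $N(\cE,B)\le e^{cd}$, where $N(\cdot,\cdot)$ denotes the least number of translates of the second body needed to cover the first. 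The reason this, and not John's theorem or iterated Minkowski, is the right tool: in our situation the rank is $d=O(\log 2K)$, so a loss of $e^{O(d)}$ is only $K^{O(1)}$, whereas the $\sqrt d^{\,d}$ of John (or the $d!$ from Minkowski's second theorem) is superpolynomial in $K$.

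Now suppose Conjecture \ref{conj:convex-pfr} holds, and let $A\subseteq G$ satisfy $|A+A|\le K|A|$. Applying the convex formulation yields generators $a_0,\dots,a_d\in G$, a centrally symmetric convex body $B\subseteq\RR^d$ with $d=O(\log 2K)$ and $|B\cap\ZZ^d|=O(K^{O(1)})\,|A|$, and a set $X\subseteq G$ with $|X|=O(K^{O(1)})$, such that $A\subseteq P+X$, where $P=\{a_0+\phi(\vec n):\vec n\in B\cap\ZZ^d\}$ and $\phi\colon\ZZ^d\to G$ is the homomorphism $\vec n\mapsto n_1a_1+\dots+n_da_d$. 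Passing to $V=\operatorname{span}_{\RR}(B\cap\ZZ^d)$, replacing $\ZZ^d$ by the rank-$(\dim V)$ lattice $\ZZ^d\cap V$ and re-coordinatizing, we may assume $B$ is full-dimensional (this only decreases $d$). Applying Milman's theorem to $B$ gives a centered ellipsoid $\cE=\{v:\|\gamma_0(v)\|_2\le 1\}$ with $\gamma_0\in\GL_d(\RR)$ and $N(B,\cE),N(\cE,B)\le e^{O(d)}$.

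It remains to transport these covering estimates from $\RR^d$ to the lattice $\ZZ^d$ and then to $G$. Two elementary facts about symmetric convex bodies $C\subseteq\RR^d$ do the job: (i) if $(C+v)\cap\ZZ^d\ne\emptyset$ then $(C+v)\cap\ZZ^d\subseteq (2C\cap\ZZ^d)+v'$ for some $v'\in\ZZ^d$ --- pick $v'\in(C+v)\cap\ZZ^d$ and use $v-v'\in -C=C$, whence $C+v\subseteq 2C+v'$; and (ii) for $t\ge 1$, $|tC\cap\ZZ^d|\le(4t+1)^d\,|C\cap\ZZ^d|$ --- cover $tC$ by $(4t+1)^d$ translates of $\tfrac12 C$ via a volume--packing argument and apply $|(w+\tfrac12 C)\cap\ZZ^d|\le|C\cap\ZZ^d|$ (using $\tfrac12 C-\tfrac12 C=C$) to each piece. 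From $N(B,\cE)\le e^{O(d)}$ and (i), $B\cap\ZZ^d$ is covered by $e^{O(d)}$ lattice translates of $2\cE\cap\ZZ^d$; pushing forward by $\phi$ gives $P\subseteq P'+\phi(T')$ where $P'=\{a_0+\phi(\vec n):\vec n\in 2\cE\cap\ZZ^d\}$ is the ellipsoid progression with the \emph{same} generators $a_0,\dots,a_d$ and matrix $\tfrac12\gamma_0$, and $|\phi(T')|\le e^{O(d)}=K^{O(1)}$; hence $A\subseteq P'+X'$ with $X'=\phi(T')+X$, $|X'|=K^{O(1)}$. For the size of $P'$: from $N(\cE,B)\le e^{O(d)}$ we cover $2\cE$ by $e^{O(d)}$ translates of $2B$, and then (i) (applied to $C=2B$) together with (ii) (applied with $t=4$) give $|2\cE\cap\ZZ^d|\le e^{O(d)}\,|4B\cap\ZZ^d|\le e^{O(d)}\,|B\cap\ZZ^d|=O(K^{O(1)})\,|A|$. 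Thus $P'$ is an ellipsoid progression of rank $d=O(\log 2K)$ and size $O(K^{O(1)})\,|A|$ witnessing Conjecture \ref{conj:ellipsoid-pfr} for $A$.

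The one genuinely hard point is the use of Milman's $M$-ellipsoid theorem; everything after it is routine covering-number bookkeeping, the only technical nuisance being the reduction to full-dimensional $B$. Any cruder ellipsoidal approximation would incur factors like $\sqrt d^{\,d}$ or $d!$, which fail to be $K^{O(1)}$ when $d\asymp\log K$, so the equivalence genuinely rests on the reverse Brunn--Minkowski inequality.
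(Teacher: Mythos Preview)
Your proof is correct and follows essentially the same strategy as the paper: both use Milman's $M$-ellipsoid/reverse Brunn--Minkowski theorem as the key input, with everything else being covering-number bookkeeping to pass from convex bodies to lattice points. The only notable difference is that the paper obtains the lattice covering slightly more directly, by choosing a maximal $Y\subseteq C\cap\ZZ^d$ with the translates $y+\tfrac12 B$ pairwise disjoint (so $Y$ is already a subset of $\ZZ^d$ and one gets $C\cap\ZZ^d\subseteq Y+(B\cap\ZZ^d)$ with no dilation), rather than first taking a continuous covering and then snapping to lattice points via your facts (i) and (ii); on the other hand, your explicit reduction to the full-dimensional case is a point the paper glosses over.
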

I.e.~if Conjecture \ref{conj:convex-pfr} is true at all then it suffices to consider convex sets $B$ that are ellipsoids.

\begin{remark}
  In fact there is nothing special about ellipsoids: it is true, and our proof will implicitly show, that the class of all convex bodies $\{ \gamma(B_0) \colon \gamma \in \GL_d(\RR) \}$ is sufficient for any fixed convex body $B_0$ (or rather, one for each $d$).  For instance, yet another formulation would be in terms of \emph{skew progressions} (not a standard term)
  \[
    P = \big\{ a_0 + n_1 a_1 + \dots + n_d a_d \colon \vec{n} = (n_1,\dots,n_d) \in \ZZ^d,\, |\phi_1(\vec{n})|, \dots, |\phi_d(\vec{n})| \le 1 \big\}
  \]
for some $a_i$ and some basis of linear forms $\phi_1,\dots,\phi_d \colon \RR^d \to \RR$; i.e., taking $B_0 = [-1,1]^d$.  So, the weakness of Conjecture \ref{conj:gap-pfr} exploited by the Lovett--Regev argument is not the restriction on the shape of $B$, but the requirement that it be aligned to some lattice basis for $\ZZ^d$.
\end{remark}
\begin{remark}
  Another variant would be to replace the set $P$ by a Gaussian density
  \[
    \theta(x) = \sum_{\vec{n} \colon a_0 + n_1 a_1 + \dots + n_d a_d = x} \exp(-\|\gamma(\vec{n})\|_2^2)
  \]
and replace the covering requirement $A \subseteq X + P$ by a correlation one such as $\langle 1_A, \theta \rangle \gg K^{-O(1)} \|\theta\|_2 \|1_A\|_2$.  This is readily seen to be equivalent to Conjecture \ref{conj:ellipsoid-pfr} using standard tools.
\end{remark}

The non-trivial ingredient in the proof of Theorem \ref{thm:main} comes from asymptotic convex geometry, and can be encapsulated in the following (very much non-trivial) result due to Milman \cite{milman}.\footnote{The reader could consult \cite{gm} for an overview of these ideas.  In the case that $B_2$ is a Euclidean ball and $\gamma_1 = \id$, the ellipsoid $\gamma_2(B_2)$ is referred to as the \emph{$M$-ellipsoid} of $B_1$.}

\begin{theorem}[Milman's reverse Brunn--Minkowski inequality]
  \label{thm:mil}
  There is an absolute constant $c > 0$ such that the following holds.  For any $d$, and any two convex bodies $B_1$ and $B_2$ in $\RR^d$, there exist volume-preserving linear maps $\gamma_1, \gamma_2 \in \SL_d(\RR)$ such that for all $t_1, t_2 > 0$:
  \[
    \vol(t_1 \gamma_1(B_1) + t_2 \gamma_2(B_2))^{1/d} \le c \left(t_1 \vol(B_1)^{1/d} + t_2 \vol(B_2)^{1/d} \right) \, .
  \]
  It is clear one can take $\gamma_1 = \id$ if desired.
\end{theorem}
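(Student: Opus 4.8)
The plan is to deduce the theorem from a single deep input of asymptotic convex geometry --- the existence of \emph{$M$-ellipsoids}, due to Milman --- by an elementary submultiplicativity argument for covering numbers. Throughout, $N(U,V)$ denotes the least number of translates of $V$ whose union contains $U$, and $D\subseteq\RR^d$ is the Euclidean unit ball. The input I would quote is the following: there is an absolute constant $C$ such that every convex body $K\subseteq\RR^d$ admits a volume-preserving $T\in\SL_d(\RR)$ with $N\big(T(K),D_K\big)\le e^{Cd}$, where $D_K=\rho D$ with $\rho>0$ chosen so that $\vol(D_K)=\vol(K)$. (This is one half of the usual $M$-position condition; the matching bound $N(D_K,T(K))\le e^{Cd}$ also holds, but is not needed here.)

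\emph{First, the reduction.} Apply the $M$-ellipsoid theorem to $B_1$ and to $B_2$ separately, obtaining $\gamma_1,\gamma_2\in\SL_d(\RR)$ and Euclidean balls $D_i=r_iD$ with $\vol(D_i)=\vol(\gamma_i(B_i))=\vol(B_i)$ and $N(\gamma_i(B_i),D_i)\le e^{Cd}$ for $i=1,2$. Covering $\gamma_i(B_i)$ by $N_i\le e^{Cd}$ translates of $r_iD$ and adding the two coverings, $t_1\gamma_1(B_1)+t_2\gamma_2(B_2)$ is covered by $N_1N_2$ translates of $(t_1r_1+t_2r_2)D$, so
\[
  \vol\big(t_1\gamma_1(B_1)+t_2\gamma_2(B_2)\big)^{1/d}\ \le\ (N_1N_2)^{1/d}\,(t_1r_1+t_2r_2)\,\vol(D)^{1/d}.
\]
Since $r_i\,\vol(D)^{1/d}=\vol(D_i)^{1/d}=\vol(B_i)^{1/d}$, the right-hand side equals $(N_1N_2)^{1/d}\big(t_1\vol(B_1)^{1/d}+t_2\vol(B_2)^{1/d}\big)\le e^{2C}\big(t_1\vol(B_1)^{1/d}+t_2\vol(B_2)^{1/d}\big)$, which is the asserted inequality with $c=e^{2C}$, uniformly in $t_1,t_2>0$. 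To arrange $\gamma_1=\id$ one uses $\det\gamma_1=1$: applying $\gamma_1^{-1}$ gives $\vol\big(t_1\gamma_1(B_1)+t_2\gamma_2(B_2)\big)=\vol\big(t_1B_1+t_2(\gamma_1^{-1}\gamma_2)(B_2)\big)$, so the pair $(\gamma_1,\gamma_2)$ may be replaced by $(\id,\gamma_1^{-1}\gamma_2)$, with $\gamma_1^{-1}\gamma_2\in\SL_d(\RR)$.

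\emph{The main obstacle} is the $M$-ellipsoid theorem itself, which is genuinely non-elementary and which I would take essentially wholesale from Milman~\cite{milman} (see \cite{gm} for an exposition). Its skeleton: (a)~put $K$ in its $\ell$-position (Lewis; Figiel--Tomczak-Jaegermann); Pisier's bound $O(\log d)$ for the $K$-convexity constant of $(\RR^d,\|\cdot\|_K)$ then yields the $MM^*$ estimate $M(K)\,M^*(K)=O(\log d)$, where $M(K)$ and $M^*(K)$ are the spherical means of $\|\cdot\|_K$ and of the support function $h_K$; (b)~Sudakov's inequality and its dual (Pajor--Tomczak-Jaegermann) convert this mean-width control into covering-number bounds at a single reference scale, but with a polylogarithmic-in-$d$ factor in the exponent; (c)~the Bourgain--Milman inverse Santaló inequality $(\vol(K)\,\vol(K^{\circ}))^{1/d}\ge c\,\vol(D)^{2/d}$ identifies the equal-volume scale as the correct reference; and (d)~an iteration over dyadic scales --- in the spirit of the quotient-of-subspace theorem, or of Pisier's regular $M$-position --- absorbs the per-step polylogarithmic loss of (b) into a single absolute constant, delivering $N(T(K),D_K)\le e^{Cd}$. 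Steps (b)--(d), and beneath them Pisier's $K$-convexity estimate in (a), carry essentially all of the difficulty; the remainder, including the whole reduction above, is routine.
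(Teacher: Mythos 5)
The paper does not prove Theorem~\ref{thm:mil}: it is quoted as a black-box result of Milman, with a pointer to \cite{milman} and the exposition \cite{gm}. So there is no ``paper's proof'' to compare against; what you have written is a genuine proof sketch of the imported theorem.

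Your reduction of the two-body reverse Brunn--Minkowski inequality to the $M$-ellipsoid theorem is correct and is exactly the standard one. The covering submultiplicativity step is right: if $\gamma_i(B_i)$ is covered by $N_i$ translates of $r_iD$, then $t_1\gamma_1(B_1)+t_2\gamma_2(B_2)$ is covered by $N_1N_2$ translates of $(t_1r_1+t_2r_2)D$, and the normalization $r_i\,\vol(D)^{1/d}=\vol(B_i)^{1/d}$ gives the stated inequality with $c=e^{2C}$. The observation that one may replace $(\gamma_1,\gamma_2)$ by $(\id,\gamma_1^{-1}\gamma_2)$ because $\gamma_1^{-1}$ preserves volume is also correct. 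Your skeleton (a)--(d) of the $M$-ellipsoid theorem itself --- $\ell$-position, Pisier's $K$-convexity bound giving $MM^*=O(\log d)$, Sudakov and dual Sudakov, Bourgain--Milman, and a dyadic iteration to kill the polylog loss --- is the standard route and is accurately recalled, though of course it is only a sketch and the real weight sits there, not in the reduction. Two small caveats worth flagging if you were to write this out in full: the theorem as stated applies to \emph{all} convex bodies, whereas the $\ell$-position/$K$-convexity machinery is formulated for symmetric bodies, so the non-symmetric case needs an extra normalization step (e.g.\ translating to the Santaló point and invoking Rogers--Shephard); and ``volume-preserving'' in the theorem really means $|\det|=1$, so the restriction to $\SL_d$ rather than all of $\GL_d$ with $|\det|=1$ is harmless but should be noted. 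For the application in the paper, where $B_1$ is the given symmetric convex body and $B_2$ is a Euclidean ball, neither caveat bites.
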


\section{Proof of the main theorem}

As we have stated, most of the work in the proof is done by Theorem \ref{thm:mil}.

\begin{proof}[Proof of Theorem \ref{thm:main}]
  Suppose $A \subseteq G$ with $|A+A| \le K |A|$ is given.  Applying Conjecture \ref{conj:convex-pfr} to $A$, we are given a symmetric convex body $C \subseteq \RR^d$, elements $a_0 \in G$, $\vec{a} \in G^d$ and $X \subseteq G$ such that $d = O(\log 2 K)$, $|C \cap \ZZ^d| = O(K^{O(1)}) |A|$, $|X| = O(K^{O(1)})$ and $A \subseteq P + X$ where
  \[
    P = \{ a_0 + \vec{a} \cdot \vec{n} \colon \vec{n} \in C \cap \ZZ^d \} \, .
  \]
Let $B_0$ denote the standard Euclidean ball $\{ v \in \RR^d \colon \|v\|_2 \le R\}$ where $R$ is chosen so that $\vol B_0 = \vol C = V$.  Applying Theorem \ref{thm:mil}, we obtain $\gamma \in \SL_d(\RR)$ such that,  writing $B$ for the ellipsoid $\gamma(B_0)$, we have
  \begin{equation}
    \label{eq:vol}
    \vol(t_1 C + t_2 B) \le c^d (t_1 + t_2)^d V
  \end{equation}
  for any $t_1, t_2 > 0$.  We make the following claim:
  \begin{claim*}
    There exist finite sets $Y, Z \subseteq \ZZ^d$ with $|Y|, |Z| = \exp(O(d))$, such that
    \[
      (C \cap \ZZ^d) \subseteq Y + (B \cap \ZZ^d)
    \]
    and
    \[
      (B \cap \ZZ^d) \subseteq Z + (C \cap \ZZ^d)\, .
    \]
  \end{claim*}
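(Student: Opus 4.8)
The plan is to derive both inclusions from a single Ruzsa-style covering argument carried out directly on lattice points: once the translate centres are chosen among the integer points of the relevant body, no rounding step is needed, and the number of translates is controlled by feeding the volume inequality~\eqref{eq:vol} into a packing bound. Nothing beyond \eqref{eq:vol} and elementary convexity is used, so — consistently with the fact that Theorem~\ref{thm:mil} does most of the work — the argument is short.

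For the inclusion $(C \cap \ZZ^d) \subseteq Y + (B \cap \ZZ^d)$ I would let $Y$ be a subset of $C \cap \ZZ^d$ that is maximal subject to the translates $\{\, y + \tfrac12 B : y \in Y \,\}$ being pairwise disjoint; such a $Y$ exists and is finite since $C$ is bounded. It covers in the required sense: fix $n \in C \cap \ZZ^d$; if $n \in Y$ we are done because $0 \in B \cap \ZZ^d$, and otherwise maximality forces $(n + \tfrac12 B) \cap (y + \tfrac12 B) \ne \emptyset$ for some $y \in Y$, whence $n - y \in \tfrac12 B + \tfrac12 B = B$ (using that $B$ is convex and centrally symmetric), while also $n - y \in \ZZ^d$, so $n - y \in B \cap \ZZ^d$. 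To bound $|Y|$, note that the disjoint sets $y + \tfrac12 B$ with $y \in Y \subseteq C$ all lie in $C + \tfrac12 B$, so $|Y|\, 2^{-d} V = |Y| \vol(\tfrac12 B) \le \vol(C + \tfrac12 B) \le c^d (\tfrac32)^d V$, the last step being \eqref{eq:vol} with $t_1 = 1$ and $t_2 = \tfrac12$; hence $|Y| \le (3c)^d = \exp(O(d))$.

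The hypotheses are symmetric in $C$ and $B$: both have volume $V$, and \eqref{eq:vol} is unchanged under swapping $C \leftrightarrow B$ (and $t_1 \leftrightarrow t_2$). So the same argument, with the roles of $C$ and $B$ reversed, yields a finite $Z \subseteq \ZZ^d$ with $(B \cap \ZZ^d) \subseteq Z + (C \cap \ZZ^d)$ and $|Z| \le (3c)^d = \exp(O(d))$, using $0 \in C \cap \ZZ^d$. This proves the claim.

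The only delicate point — and it is a small one — is that the covering must hold for lattice points and not merely for the bodies, which is exactly why the centres $y$ are taken in $C \cap \ZZ^d$ and why the factors of $\tfrac12$ are arranged so that the accumulated slack lands inside $B$ itself rather than inside a dilate of it. Equality of volumes alone would be useless here: two long thin boxes pointing in transverse directions have the same volume but essentially disjoint lattice-point sets, and no bounded number of translates can relate them. The entire force of the statement is that $B$ is the Milman ellipsoid of $C$, so that \eqref{eq:vol} holds and bounds the packing numbers $|Y|$ and $|Z|$.
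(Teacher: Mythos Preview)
Your argument is correct and is essentially identical to the paper's own proof: both take $Y$ to be a maximal subset of $C\cap\ZZ^d$ with the translates $y+\tfrac12 B$ pairwise disjoint, deduce the covering from maximality, and bound $|Y|\le (3c)^d$ via $\vol(C+\tfrac12 B)\le c^d(3/2)^d V$ from~\eqref{eq:vol}, then swap $B$ and $C$ for $Z$. The only difference is cosmetic --- you spell out the step $n-y\in\tfrac12 B+\tfrac12 B=B$ and the role of $0\in B\cap\ZZ^d$ explicitly, whereas the paper compresses this into one line.
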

  Given this, we can deduce that
  \[
    |B \cap \ZZ^d| \le |Z|\, |C \cap \ZZ^d| = \exp(O(d)) O(K^{O(1)}) |A| = O(K^{O(1)}) |A|
  \]
  as $d = O(\log 2 K)$, and that $A \subseteq P + X \subseteq P' + X'$ where
  \[
    P' = \big\{ a_0 + \vec{a} \cdot \vec{n} \colon \vec{n} \in B \cap \ZZ^d \big\}
  \]
  and
  \[
    X' = X + \big\{ \vec{a} \cdot \vec{y} \colon \vec{y} \in Y \big\} \, ,
  \]
  meaning $|X'| \le |X|\, |Y| = O(K^{O(1)})$; so this suffices to proves the result.

  \begin{proof}[Proof of claim]
    This is a fairly standard packing/covering argument.  Let $Y$ be a maximal subset of $C \cap \ZZ^d$ such that the sets $y + B/2$ for $y \in Y$ are disjoint.  By maximality, $C \cap \ZZ^d \subseteq Y + B$, and hence
    \[
      C \cap \ZZ^d \subseteq (Y + B) \cap \ZZ^d = Y + (B \cap \ZZ^d) \, .
    \]
    Also, each set $y + B/2$ for $y \in Y$ is contained in $C + B/2$, so by disjointness and volume-counting we have
    \[
      |Y| \le \frac{\vol(C + B/2)}{\vol(B/2)} \le \frac{c^d (3/2)^d V}{(1/2)^{d} V} = (3 c)^d
    \]
    by \eqref{eq:vol}.  The argument for $Z$ is analogous, exchanging the roles of $B$ and $C$.
  \end{proof}
  This completes the proof of Theorem \ref{thm:main}.
\end{proof}

\bibliography{master}{}

\begin{thebibliography}{GM04}

\bibitem[Fm73]{freiman}
G.~A. Fre\u\i~man.
\newblock {\em Foundations of a structural theory of set addition}.
\newblock American Mathematical Society, Providence, R. I., 1973.
\newblock Translated from the Russian, Translations of Mathematical Monographs,
  Vol 37.

\bibitem[GM04]{gm}
A.~A. Giannopoulos and V.~D. Milman.
\newblock Asymptotic convex geometry: short overview.
\newblock In {\em Different faces of geometry}, volume~3 of {\em Int. Math.
  Ser. (N. Y.)}, pages 87--162. Kluwer/Plenum, New York, 2004.

\bibitem[GR07]{green-ruzsa}
Ben Green and Imre~Z. Ruzsa.
\newblock Freiman's theorem in an arbitrary abelian group.
\newblock {\em J. Lond. Math. Soc. (2)}, 75(1):163--175, 2007.

\bibitem[Gre07]{green}
Ben Green.
\newblock The {P}olynomial {F}re\u{\i}man--{R}uzsa {C}onjecture.
\newblock {T}erence {T}ao's blog, 2007.

\bibitem[LR17]{lovett-regev}
Shachar Lovett and Oded Regev.
\newblock A counterexample to a strong variant of the polynomial
  {F}reiman-{R}uzsa conjecture in {E}uclidean space.
\newblock {\em Discrete Anal.}, pages Paper No. 8, 6, 2017.

\bibitem[Mil86]{milman}
Vitali~D. Milman.
\newblock In\'egalit\'e de {B}runn-{M}inkowski inverse et applications \`a la
  th\'eorie locale des espaces norm\'es.
\newblock {\em C. R. Acad. Sci. Paris S\'er. I Math.}, 302(1):25--28, 1986.

\end{thebibliography}
\bibliographystyle{alpha}
\end{document}